\documentclass{amsart}

\usepackage{amssymb}
\usepackage{amsmath}
\usepackage{amsthm}
\usepackage{amsfonts}

\usepackage{a4wide}
\usepackage{longtable}
\usepackage{multirow}

\newtheorem{lemma}{Lemma}[section]
\newtheorem{proposition}{Proposition}[section]
\newtheorem{theorem}{Theorem}[section]

\theoremstyle{definition}

\theoremstyle{remark}

\begin{document}

\title[The entropy of coclass one]{The entropy of coclass one \\ and of SmallGroup(729,45)}

\author{Helga Boyer von Berghof}
\address{Krenngasse 43\\8010 Graz\\Austria}
\email{helgaboyervonberghof@gmail.com}

\subjclass[2000]{11R37, 11R29, 11R11, 11R16, 11R20; 20D15, 20F14}

\keywords{Finite \(p\)-groups, (Schur\(+1\)) \(\sigma\)-groups, real probability measure, entropy, Galois-group, maximal unramified pro-\(p\)-extension, \(p\)-class field tower}

\date{Saturday, 5 September 2026}


\begin{abstract}
Investigating the descendant tree \(\mathcal{T}(\Delta)\) of \(3\)-groups \(G\) of maximal nilpotency-class,
with root \(\Delta=\mathrm{SmallGroup}(27,3)\),
we show for the first time that an entropy-value
\(H({\bf P}_r)=\sum\,{\bf P}_r(G)\cdot\lvert\log({\bf P}_r(G))\rvert\)
can also be determined for the real probability-distribution
\({\bf P}_r(G)\)
of all terminal vertices outside of the mainline on this infinite digraph.
These vertices arise as automorphism groups
\(\mathrm{Gal}(\mathrm{F}_3^\infty(K)/K)\)
of maximal unramified pro-\(3\)-extensions
\(\mathrm{F}_3^\infty(K)\)
of real-quadratic number fields
\(K=\mathbb{Q}(\sqrt{d})\), \(d>0\).
For the groups \(G\) of another descendant tree \(\mathcal{T}(N)\),
with root \(N=\mathrm{SmallGroup}(729,45)\)
and unbounded coclass,
which has been studied by L. Bartholdi and M. R. Bush
\cite{BaBu2007},
we compare the entropy \(H({\bf P}_r)\) of the real measure \({\bf P}_r(G)\)
with the entropy \(H({\bf P}_i)\) of the imaginary measure \({\bf P}_i(G)\).
\end{abstract}

\maketitle


\section{Entropy of descendant trees}
\label{s:Entropy}

\noindent
The notion of \textit{entropy} was introduced in number theory
by N. Minculete 
\cite{MiPo2011}
and D. Savin 
\cite{MiSa2023}.
Together with coauthors D. C. Mayer and V. Monescu
\cite{MMSM2026},
these authors were the first who determined the entropy 
\(H({\bf P})=\sum\,{\bf P}(G)\cdot\lvert\log({\bf P}(G))\rvert\)
of \textit{infinite probability-distributions}
\({\bf P}\).
Up to now, such investigations were confined to the probability-measure \({\bf P}(G)\)
of descendant trees with Schur \(\sigma\)-groups \(G\)
\cite{Ag1998}
as terminal vertices, which arise as Galois-groups
\(\mathrm{Gal}(\mathrm{F}_3^\infty(K)/K)\)
of maximal unramified pro-\(3\)-extensions
\(\mathrm{F}_3^\infty(K)\),
that is, \(3\)-class field towers,
of \textit{imaginary}-quadratic number fields
\(K=\mathbb{Q}(\sqrt{d})\)
with negative fundamental-discriminant \(d<0\)
\cite{KoVe1975}.
In the present article, we shall illuminate the analogous situation 
of (Schur\(+1\)) \(\sigma\)-groups \(G\)
\cite{BBH2021}
arising from \textit{real}-quadratic number fields \(K=\mathbb{Q}(\sqrt{d})\)
with positive fundamental-discriminant \(d>0\).
In both cases, our focus primarily lies on quadratic fields \(K\)
with \textit{elementary bicyclic} \(3\)-class group
\(\mathrm{Cl}_3(K)\simeq(\mathbb{Z}/3\mathbb{Z})^2\),
which is isomorphic to the commutator-quotient
\(G/G^\prime\)
of the tower group \(G\),
by the Artin reciprocity law of class field theory.


\subsection{Entropy of coclass one}
\label{ss:CoClassOne}
First we consider the descendant tree
\(\mathcal{T}(\Delta)\)
of metabelian \(3\)-groups of maximal nilpotency class
\(\mathrm{cl}(G)=e-1\)
in terms of the logarithmic order
\(\mathrm{lo}(G)=e\), i.e. \(\#G=3^e\),
which is called \textit{coclass one},
\(\mathrm{cc}(G)=\mathrm{lo}(G)-\mathrm{cl}(G)=1\).
Its root is the extra-special group \(\Delta=\langle 27,3\rangle\)
\cite{BEO2005}.
This tree with constant coclass is populated only by the groups
\(\mathrm{Gal}(\mathrm{F}_3^\infty(K)/K)\)
of \(3\)-class field towers
of real-quadratic fields \(K\).
For its analysis, we need the \textit{real probability-measure}
by Boston, Bush, Hajir (2021)
\cite{BBH2021}
in the Formula
\eqref{eqn:RealMeasure}
and the (Schur\(+1\)) \(\sigma\)-descendants \(G\) of \(\Delta\) in the Table
\ref{tbl:FormationLawsOne},
grouped by their \textit{transfer kernel type} (TKT).
The relation rank is denoted by \(r\).
\begin{equation}
\label{eqn:RealMeasure}
{\bf P}_r(G)=\frac{y(G)^{g+1}}{\#\mathrm{Aut}(G)\cdot\#G}\cdot(p^g)^{g+1}\cdot\prod_{k=1}^{g}\,\left(1-\frac{1}{p^k}\right)^2
\cdot\left(1-\frac{1}{p^{g+1}}\right)\cdot\prod_{k=1}^{g+1-r}\,\left(1-\frac{1}{p^k}\right)^{-1}
\end{equation}
In the relevant special case of
the smallest odd prime number \(p=3\)
and the generator rank \(g=2\),
the constant factor after the invariants which depend on \(G\) is given by:
\begin{equation}
\label{eqn:RealFactor}
\frac{2^9\cdot 13}{3^3} \text{ for } r=g+1 \text{ (Schur+1)}
\quad \text{ and } \quad 
\frac{2^8\cdot 13}{3^2} \text{ for } r=g \text{ (Schur)}.
\end{equation}


\begin{theorem}
\label{thm:CoClassOne}
The \textbf{entropy} of the descendant tree
\(\mathcal{T}(\Delta)\)
of all metabelian \(3\)-groups \(G\) of maximal nilpotency-class and fixed \textbf{coclass one},
\(\mathrm{cc}(G)=1\),
which are descendants of the extra-special \(3\)-group
\(\Delta=\langle 27,3\rangle\)
of exponent \(3\) without exceptions,
with respect to the normalized relative-measure
\({\bf P}_{rel}(G)=\frac{3^7}{2^7\cdot 13}\cdot{\bf P}_{r}(G)\)
associated to the real probability-measure in Formula
\eqref{eqn:RealMeasure},
is given by
\begin{equation}
\label{eqn:Entropy1}
H({\bf P}_{rel})
=-\sum\,{\bf P}_{rel}(G_n^i)\log({\bf P}_{rel}(G_n^i))
=\frac{1}{26}\biggl(41\log(3)-8\log(2)\biggr)
\approx{\bf 1.51915}.
\end{equation}
\end{theorem}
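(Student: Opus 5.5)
The plan is to reduce the entropy to a few explicit geometric series. The first task is to list all terminal (Schur\(+1\)) \(\sigma\)-vertices \(G_n^i\) of \(\mathcal{T}(\Delta)\) lying off the mainline, sorted by logarithmic order \(n=\mathrm{lo}(G)\) and by transfer kernel type, in the spirit of Table \ref{tbl:FormationLawsOne}. For coclass one the tree is periodic of length two from a small order onwards. So I expect, for each branch of depth \(n\), a fixed finite number \(m_n\in\{m_{\mathrm{even}},m_{\mathrm{odd}}\}\) of relevant terminal vertices, each with relation rank \(r=3\) (Schur\(+1\)). The statement says ``without exceptions''. I would therefore verify, via the known presentations \(G_n^i\) of maximal class \(3\)-groups, that every branch contributes the same pattern, including the first branches (orders \(3^4,3^5\)). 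If the small branches do not fit, they must be treated separately.

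The second step is to evaluate Formula \eqref{eqn:RealMeasure} on each such vertex. This needs three invariants: \(\#G=3^n\); \(\#\mathrm{Aut}(G)\), which for metabelian maximal class \(3\)-groups with \(\sigma\)-automorphism has the form \(2\cdot 3^{\alpha n+\beta}\), with \(\alpha,\beta\) read off from the action on \(G/G'\) and the central series; and \(y(G)\), the number of generator-inverting automorphisms. The latter should be a constant multiple of \(\#\mathrm{Aut}(G)/(\text{size of the }\mathrm{GL}_2\text{-orbit stabilizer part})\), so that \(y(G)^3/\#\mathrm{Aut}(G)\) collapses to \(c_i\cdot 3^{\gamma n}\). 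With the constant \(\frac{2^9\cdot 13}{3^3}\) from \eqref{eqn:RealFactor} and the normalization \(\frac{3^7}{2^7\cdot 13}\), this gives \({\bf P}_{rel}(G_n^i)=a_i\cdot 2^{s_i}\cdot 3^{-\lambda n}\) with explicit rational constants. The first check is \(\sum_{n,i}{\bf P}_{rel}(G_n^i)=1\), which should follow from a geometric series in \(3^{-\lambda}\). This check confirms that the normalizing factor in the theorem is exactly the reciprocal of the total real mass of \(\mathcal{T}(\Delta)\).

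The third step is the entropy computation. I write \(-\log{\bf P}_{rel}(G_n^i)=\lambda n\log 3-\log a_i-s_i\log 2\) and split the sum. The constant parts give \(-\sum_i(\log a_i+s_i\log2)\cdot(\text{mass of type }i)\). The linear part gives \(\lambda\log3\cdot\mathbb{E}[n]\), and \(\mathbb{E}[n]\) is computed from \(\sum_n n x^n=x/(1-x)^2\) with \(x=3^{-\lambda}\), split by parity if the period-two pattern requires it. Collecting coefficients of \(\log 3\) and \(\log 2\) should produce \(\frac{1}{26}(41\log3-8\log2)\). The denominator \(26=2\cdot13\) is consistent with the factor \(13=(1-3^{-3})\cdot 3^3/2\) entering through the total mass. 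A numerical partial sum, compared with \(1.51915\), serves as a sanity check.

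The main obstacle is the second step: getting \(\#\mathrm{Aut}(G_n^i)\) and especially \(y(G_n^i)\) exactly right, uniformly in \(n\) and across the different TKTs (for instance a.2/a.3 versus a.1), together with the correct multiplicities per branch. I would first compute these invariants by hand for orders \(3^4,\dots,3^7\) from the parametrized presentations, extract the pattern, and then prove it using the periodicity isomorphisms between consecutive branches. Any error here shifts the exponent \(\lambda\) or the constants \(a_i\), and the normalization check \(\sum{\bf P}_{rel}=1\) is the safeguard against it.
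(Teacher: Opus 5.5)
Your skeleton is the paper's: group the off-mainline (Schur$+1$) $\sigma$-vertices by TKT, evaluate \eqref{eqn:RealMeasure} with the constant $\frac{2^9\cdot 13}{3^3}$, divide by the total mass, and split $-\log{\bf P}_{rel}$ into a part linear in the state $n$ and a constant part. The linear part is summed via $\sum_n nq^n=q/(1-q)^2$ at $q=\frac{1}{27}$.

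The gap is that the proposal stops where the proof's content lies. Every quantity that determines the answer ($m_n$, $\alpha$, $\beta$, $\gamma$, $\lambda$, $a_i$, $s_i$) is left open, and the value $\frac{1}{26}(41\log 3-8\log 2)$ is only asserted to come out. What is needed is the data of Table~\ref{tbl:FormationLawsOne}:
the counted vertices all have order $3^{2n+4}$;
they fall into the families a.1, a.2, a.3 with multiplicities $3,1,2$;
$y(G_n^i)=3^{n+1}$ throughout;
and $\#\mathrm{Aut}(G_n^i)=2\cdot 3^{4n+4}$, $2\cdot 3^{4n+5}$, $2^2\cdot 3^{4n+4}$, respectively.
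This yields ${\bf P}_{rel}=\frac{2}{3^{3n+1}}$, $\frac{2}{3^{3n+2}}$, $\frac{1}{3^{3n+1}}$ with family masses $\frac{1}{13}$, $\frac{3}{13}$, $\frac{9}{13}$. After that the entropy is a two-line computation. The paper does not prove these formation laws; it takes them as given from computational data. Your plan to establish them for all $n$ via periodicity would be stronger, but it has to be carried out.

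Moreover, two of your structural guesses are wrong and would derail that execution.

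First, suppose $y(G)$ were $\#\mathrm{Aut}(G)$ divided by a bounded $\mathrm{GL}_2$-factor. That is the size of the full coset of $K=\ker(\mathrm{Aut}(G)\to\mathrm{Aut}(G/\Phi(G)))$ inducing inversion. Then $y(G)^3/(\#\mathrm{Aut}(G)\cdot\#G)$ would grow like $3^{6n}$, and the measure would not even be summable. What is counted are the generator-inverting \emph{involutions}. Since $K$ is a $3$-group, Schur--Zassenhaus makes them a single $K$-conjugacy class, so $y(G)=[K:C_K(\sigma)]$, which here is only $3^{n+1}$. Likewise, the $2$-part of $\#\mathrm{Aut}(G)$ is not always $2$: it is $2^2$ for a.3, which is exactly why that family contributes no $\log 2$ term.

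Second, the pattern is not uniform down to the first branch. At order $3^4$ only $\langle 81,7\rangle$, $\langle 81,8\rangle$ (a.3) and $\langle 81,10\rangle$ (a.2) occur. The three a.1 vertices without abelian maximal subgroup first appear at order $3^6$. Hence the a.1 series runs over $n\ge 1$ and contributes $\sum_{n\ge 1}27^{-n}=\frac{1}{26}$ rather than $\frac{27}{26}$. Treating it like the other families would triple the total mass and change both the normalizing constant and the entropy.
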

\begin{proof}
Using the Formula
\eqref{eqn:RealMeasure},
for \(r=g+1\),
and the information in Table
\ref{tbl:FormationLawsOne},
we calculate the measures
\({\bf P}_r(G)=\frac{y(G)^{2+1}}{\#\mathrm{Aut}(G)\cdot\#G}\cdot\frac{2^9\cdot 13}{3^3}\), grouped by TKTs: \\
a.1: \(\frac{(3^{n+1})^3}{2\cdot 3^{4n+4}\cdot 3^{2n+4}}\cdot\frac{2^9\cdot 13}{3^3}=\frac{2^8\cdot 13}{3^{3n+8}}\),
a.2: \(\frac{(3^{n+1})^3}{2\cdot 3^{4n+5}\cdot 3^{2n+4}}\cdot\frac{2^9\cdot 13}{3^3}=\frac{2^8\cdot 13}{3^{3n+9}}\),
a.3: \(\frac{(3^{n+1})^3}{2^2\cdot 3^{4n+4}\cdot 3^{2n+4}}\cdot\frac{2^9\cdot 13}{3^3}=\frac{2^7\cdot 13}{3^{3n+8}}\).

\renewcommand{\arraystretch}{1.2}
\begin{table}[ht]
\caption{Parametrized formation-laws of invariants, grouped by TKT}
\label{tbl:FormationLawsOne}
\begin{center}
\begin{tabular}{|c||c|c|c|}
\hline
 Transfer Kernel Type      & a.1, \(\varkappa\sim (0000)\)    &  a.2, \(\varkappa\sim (1000)\)   &  a.3, \(\varkappa\sim (2000)\)   \\
\hline
 State                     & \(n\ge 1\), \(3\) groups         & \(n\ge 0\), \(1\) group          & \(n\ge 0\), \(2\) groups         \\
 \(y(G_n^i)\)              & \(3^{n+1}\)                      & \(3^{n+1}\)                      & \(3^{n+1}\)                      \\
 \(\#\mathrm{Aut}(G_n^i)\) & \(2\cdot 3^{4n+4}\)              & \(2\cdot 3^{4n+5}\)              & \(2^2\cdot 3^{4n+4}\)            \\
 \(\#G_n^i\)               & \(3^{2n+4}\)                     & \(3^{2n+4}\)                     & \(3^{2n+4}\)                     \\
 \({\bf P}_r(G_n^i)\)      & \(\frac{2^8\cdot 13}{3^{3n+8}}\) & \(\frac{2^8\cdot 13}{3^{3n+9}}\) & \(\frac{2^7\cdot 13}{3^{3n+8}}\) \\
\hline
\end{tabular}
\end{center}
\end{table}

\noindent
Now we add all probability-measures
of metabelian \(3\)-groups of maximal nilpotency-class,
grouped by their TKTs,
and using the geometric series
\(\sum_{n=0}^{\infty}\,\frac{1}{27^n}
=\frac{1}{1-\frac{1}{27}}
=\frac{1}{\frac{26}{27}}
=\frac{27}{26}
=\frac{3^3}{2\cdot 13}\).
\begin{equation}
\label{eqn:a1}
3 \text{ times a.1}: \quad
\sum_{n=1}^{\infty}\,{\bf P}_r(G_n^1)=
\sum_{n=1}^{\infty}\,\frac{2^8\cdot 13}{3^{3n+8}}=
\frac{2^8\cdot 13}{3^{8}}\cdot\sum_{n=1}^{\infty}\,\left(\frac{1}{3^3}\right)^n=
\frac{2^8\cdot 13}{3^{8}}\cdot\frac{1}{2\cdot 13}=
\frac{2^7}{3^8},
\end{equation}
\begin{equation}
\label{eqn:a2}
1 \text{ times a.2}: \quad
\sum_{n=0}^{\infty}\,{\bf P}_r(G_n^2)=
\sum_{n=0}^{\infty}\,\frac{2^8\cdot 13}{3^{3n+9}}=
\frac{2^8\cdot 13}{3^{9}}\cdot\sum_{n=0}^{\infty}\,\left(\frac{1}{3^3}\right)^n=
\frac{2^8\cdot 13}{3^{9}}\cdot\frac{3^3}{2\cdot 13}=
\frac{2^7}{3^6},
\end{equation}
\begin{equation}
\label{eqn:a3}
2 \text{ times a.3}: \quad
\sum_{n=0}^{\infty}\,{\bf P}_r(G_n^3)=
\sum_{n=0}^{\infty}\,\frac{2^7\cdot 13}{3^{3n+8}}=
\frac{2^7\cdot 13}{3^{8}}\cdot\sum_{n=0}^{\infty}\,\left(\frac{1}{3^3}\right)^n=
\frac{2^7\cdot 13}{3^{8}}\cdot\frac{3^3}{2\cdot 13}=
\frac{2^6}{3^5}.
\end{equation}
Eventually, we add these three contributions,
taking into account their multiplicities, obtaining the 
\textbf{total measure of coclass one:}
\begin{equation}
\label{eqn:cc1}
\sum\,{\bf P}_{r}(G_n^i)=
3\cdot\frac{2^7}{3^8}+\frac{2^7}{3^6}+2\cdot\frac{2^6}{3^5}=
\frac{2^7}{3^7}\cdot(1+3+3^2)=
\frac{2^7\cdot 13}{3^7}=
\frac{\bf 1664}{\bf 2187}.
\end{equation}
With this result, we must normalize the probability-measure onto sum one, \({\bf P}_{rel}(G)=\frac{3^7}{2^7\cdot 13}\cdot{\bf P}_{r}(G)\):

\renewcommand{\arraystretch}{1.2}
\begin{table}[ht]
\caption{Normalized relative-measure and logarithms, grouped by TKT}
\label{tbl:RelativeMeasuresOne}
\begin{center}
\begin{tabular}{|c||c|c|c|}
\hline
 Transfer Kernel Type            & a.1, \(\varkappa\sim (0000)\)    &  a.2, \(\varkappa\sim (1000)\)   &  a.3, \(\varkappa\sim (2000)\)   \\
\hline
 State                           & \(n\ge 1\), \(3\) groups         & \(n\ge 0\), \(1\) group          & \(n\ge 0\), \(2\) groups         \\
 \({\bf P}_{rel}(G_n^i)\)        & \(\frac{2}{3^{3n+1}}\)           & \(\frac{2}{3^{3n+2}}\)           & \(\frac{1}{3^{3n+1}}\)           \\
 \(-\log({\bf P}_{rel}(G_n^i))\) & \((3n+1)\log(3)-\log(2)\)        & \((3n+2)\log(3)-\log(2)\)        & \((3n+1)\log(3)\)                \\
\hline
\end{tabular}
\end{center}
\end{table}

\noindent
We continue this proof of Theorem
\ref{thm:CoClassOne}
after an auxiliary Lemma
\ref{lem:FinitePart}
and Proposition
\ref{prp:GeometricVariant}.
\end{proof}

\noindent
Now we need the sum of certain infinite series,
variants of the geometric series,
which cannot be found in the standard literature on analysis.
\begin{lemma}
\label{lem:FinitePart}
For each real number \(q\in\mathbb{R}\), \(q\ne 1\),
and non-negative integers \(K\le N\), the following formula holds
for finite partial sums of a geometric series:
\begin{equation}
\label{eqn:FinitePart}
q^K+q^{K+1}+\cdots+q^{N-1}+q^N=\frac{q^K-q^{N+1}}{1-q}.
\end{equation}
\end{lemma}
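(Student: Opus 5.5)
The plan is the standard telescoping argument for geometric sums. Write \(S=q^K+q^{K+1}+\cdots+q^N=\sum_{j=K}^{N}q^j\). Multiplying by \(q\) shifts every exponent up by one, so \(qS=\sum_{j=K+1}^{N+1}q^j\). In the difference \(S-qS\) all terms \(q^{K+1},\ldots,q^N\) cancel pairwise. What survives is
\begin{equation*}
(1-q)\,S=q^K-q^{N+1}.
\end{equation*}
Because \(q\ne 1\), the factor \(1-q\) is non-zero. Dividing by it gives Formula \eqref{eqn:FinitePart}.

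The cancellation step can be made rigorous by reindexing \(j\mapsto j+1\) in the sum for \(qS\). The two sums then share the index range \(K+1\le j\le N\). Only the first term \(q^K\) of \(S\) and the last term \(q^{N+1}\) of \(qS\) remain unmatched. The boundary case \(K=N\) is included automatically: then \(S=q^K\) and the formula reads \((q^K-q^{K+1})/(1-q)=q^K\), which is correct.

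An alternative check is induction on \(N\) for fixed \(K\). The base case \(N=K\) is the computation just given. The step from \(N\) to \(N+1\) uses
\begin{equation*}
\frac{q^K-q^{N+1}}{1-q}+q^{N+1}=\frac{q^K-q^{N+2}}{1-q}.
\end{equation*}

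There is no genuine obstacle. The only points needing care are the non-vanishing of \(1-q\), guaranteed by the hypothesis \(q\ne 1\), and the correct bookkeeping of the endpoint exponents \(K\) and \(N+1\). Getting these right matters because the lemma will be used, with \(q=1/27\), to handle the partial sums in the continuation of the proof of Theorem \ref{thm:CoClassOne}.
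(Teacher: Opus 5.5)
Your proof is correct and is essentially the paper's argument. The paper factors out \(q^K\) and uses the identity \((1+q+\cdots+q^{N-K})(q-1)=q^{N-K+1}-1\), which is your telescoping of \(S-qS\) applied to the shifted sum, then divides by \(1-q\neq 0\); you simply telescope directly over \(K\le j\le N\) and skip the reduction to \(K=0\).
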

\begin{proof}
Departing from the well-known polynomial identity
\[\left(q^N+q^{N-1}+\cdots+q^2+q+1\right)\cdot(q-1)=q^{N+1}-1,\]
we obtain for \(K\le N\):
\[\sum_{n=K}^N\,q^n
=q^K\cdot\sum_{n=0}^{N-K}\,q^n
=q^K\cdot\frac{1-q^{N+1-K}}{1-q}
=\frac{q^K-q^{N+1}}{1-q}. \qedhere\]
\end{proof}
\begin{proposition}
\label{prp:GeometricVariant}
For each bounded real number \(q\in\mathbb{R}\), \(\lvert q\rvert<1\),
the sum of the following variant of the infinite geometric series is given by:
\begin{equation}
\label{eqn:GeometricVariant}
\sum_{n=1}^\infty\,n\cdot q^n=\frac{q}{(1-q)^2}.
\end{equation}
\end{proposition}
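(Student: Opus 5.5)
The plan is to reduce the infinite sum to finite partial sums, express those in closed form using Lemma \ref{lem:FinitePart}, and then let \(N\to\infty\). Put
\[S_N=\sum_{n=1}^N\,n\cdot q^n.\]
The first step is to rewrite the weight \(n\) as the number of indices \(K\) with \(1\le K\le n\). Interchanging the order of summation then gives
\[S_N=\sum_{K=1}^N\,\sum_{n=K}^N\,q^n.\]
Since \(\lvert q\rvert<1\) implies \(q\ne 1\), Lemma \ref{lem:FinitePart} applies to each inner sum.

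Applying the lemma yields
\[S_N=\sum_{K=1}^N\frac{q^K-q^{N+1}}{1-q}=\frac{1}{1-q}\biggl(\sum_{K=1}^N q^K-N\,q^{N+1}\biggr).\]
Using Lemma \ref{lem:FinitePart} once more, with \(K=1\), on the remaining geometric sum gives the closed form
\[S_N=\frac{q-q^{N+1}}{(1-q)^2}-\frac{N\,q^{N+1}}{1-q}.\]

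The final step is the limit \(N\to\infty\). Here \(q^{N+1}\to 0\) because \(\lvert q\rvert<1\). The only delicate point is showing that \(N\,q^{N+1}\to 0\). I would handle it by the ratio test applied to the sequence \(a_N=N\lvert q\rvert^{N}\): the quotient \(a_{N+1}/a_N=(1+1/N)\lvert q\rvert\) tends to \(\lvert q\rvert<1\), so \(a_N\to 0\). Alternatively, write \(\lvert q\rvert=1/(1+h)\) with \(h>0\); the binomial estimate \((1+h)^N\ge\binom{N}{2}h^2\) then gives \(N\lvert q\rvert^N\le 2/((N-1)h^2)\to 0\).

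Consequently \(S_N\to q/(1-q)^2\). This shows both that the series converges and that its sum is \(q/(1-q)^2\), which is Formula \eqref{eqn:GeometricVariant}. As a sanity check, one can instead differentiate \(\sum q^n=1/(1-q)\) termwise inside the radius of convergence and multiply by \(q\). I would keep the elementary route above because it relies only on Lemma \ref{lem:FinitePart}.
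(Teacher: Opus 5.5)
Your proof is correct and takes essentially the paper's route: you split \(\sum_{n=1}^N n\,q^n\) into the tail sums \(\sum_{n=K}^N q^n\), apply Lemma~\ref{lem:FinitePart} twice to reach \(\frac{q-q^{N+1}}{(1-q)^2}-\frac{N\,q^{N+1}}{1-q}\), and let \(N\to\infty\). Your explicit argument that \(N\,q^{N+1}\to 0\), via the ratio of consecutive terms or the binomial bound (with the trivial case \(q=0\) handled separately), proves a step the paper only asserts by saying the exponential dominates every power.
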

\begin{proof}
We consider the partial sum, split it, and repeatedly use Lemma
\ref{lem:FinitePart}:
\begin{equation*}
\label{eqn:Splitting}
\begin{aligned}
\sum_{n=1}^N\,n\cdot q^n
&=1\cdot q^1+2\cdot q^2+\cdots+(N+1)\cdot q^{N-1}+N\cdot q^N \\
&=(q^1+\cdots+q^N)+(q^2+\cdots+q^N)+\cdots+(q^{N-1}+q^N)+(q^N) \\
&=\frac{q^1-q^{N+1}}{1-q}+\frac{q^2-q^{N+1}}{1-q}+\cdots+\frac{q^{N-1}-q^{N+1}}{1-q}+\frac{q^{N}-q^{N+1}}{1-q} \\
&=\frac{(q^1+q^2+\cdots+q^{N-1}+q^N)-N\cdot q^{N+1}}{1-q}=\frac{q-q^{N+1}}{(1-q)^2}-\frac{N\cdot q^{N+1}}{1-q}.
\end{aligned}
\end{equation*}
Finally we calculate the limit \(N\to\infty\), where the exponential function dominates every power:
\[\lim_{N\to\infty}\left(\sum_{n=1}^N\,n\cdot q^n\right)
=\frac{q}{(1-q)^2}-\lim_{N\to\infty}\left(\frac{q^{N+1}}{(1-q)^2}+\frac{N\cdot q^{N+1}}{1-q}\right)
=\frac{q}{(1-q)^2}. \qedhere\]
\end{proof}
\noindent
Applied to \(q=\frac{1}{27}\), we get
\(\sum_{n=1}^\infty\,\frac{n}{27^n}=\frac{\frac{1}{27}}{(1-\frac{1}{27})^2}=\frac{1}{3^3}\cdot\frac{3^6}{(2\cdot 13)^2}=\frac{3^3}{2^2\cdot 13^2}=\frac{27}{676}\),
needed in the sequel.
\begin{proof}
(of Theorem
\ref{thm:CoClassOne}
continued.)
We can now calculate the \textbf{entropy of the coclass one}:
{\small
\begin{equation*}
\label{eqn:EntropyOne}
\begin{aligned}
H({\bf P}_{rel})
&=-\sum\,{\bf P}_{rel}(G_n^i)\log({\bf P}_{rel}(G_n^i)) \\
&=3\sum_{n=1}^{\infty}\,\frac{2}{3^{3n+1}}\biggl((3n+1)\log3-\log2\biggr)
+\sum_{n=0}^{\infty}\,\frac{2}{3^{3n+2}}\biggl((3n+2)\log3-\log2\biggr) \\
&+2\sum_{n=0}^{\infty}\,\frac{1}{3^{3n+1}}(3n+1)\log3 \\
&=2\biggl(3\log3\cdot\sum_{n=1}^{\infty}\,\frac{n}{27^n}+(\log3-\log2)\cdot\sum_{n=1}^{\infty}\,\frac{1}{27^n}\biggr) \\
&+\frac{2}{9}\biggl(3\log3\cdot\sum_{n=0}^{\infty}\,\frac{n}{27^n}+(2\log3-\log2)\cdot\sum_{n=0}^{\infty}\,\frac{1}{27^n}\biggr) \\
&+\frac{2}{3}\biggl(3\log3\cdot\sum_{n=0}^{\infty}\,\frac{n}{27^n}+\log3\cdot\sum_{n=0}^{\infty}\,\frac{1}{27^n}\biggr) \\
&=\biggl(6+\frac{2}{3}+2\biggr)\log3\cdot\frac{27}{26^2}
+2(\log3-\log2)\cdot\frac{1}{26}
+\biggl(\frac{2}{9}(2\log3-\log2)+\frac{2}{3}\log3\biggr)\cdot\frac{27}{26} \\
&=\frac{1}{26}\biggl(41\log3-8\log2\biggr)
\approx{\bf 1.51915}. \qedhere 
\end{aligned}
\end{equation*}
}
\end{proof}

\newpage

\noindent
Figure
\ref{fig:MinDscTreeCc1}
shows the root region of the descendant tree
\(\mathcal{T}(\Delta)\),
embedded into the
coclass-\(1\) graph \(\mathcal{G}(3,1)\),
structured arithmetically with
minimal discriminants \(d_K\)
(underlined in bold font adjacent to surrounding ovals around vertices)
of real quadratic number fields \(K\),
up to logarithmic order \(\mathrm{lo}=10\).
Every other branch \(\mathcal{B}(e)\), with odd \(e=3,5,7,9,\ldots\),
consists of terminal (Schur\(+1\)) \(\sigma\)-groups,
except the branch root on the mainline.
The groups without an abelian maximal subgroup
in the double contour rectangle on the right hand side
can only be separated by means of \textit{deep transfers}
\cite{Ma2018}.
They start on branch \(\mathcal{B}(5)\)
without ground state \(n=0\) on branch \(\mathcal{B}(3)\).
The \textit{Artin pattern} \((\alpha,\varkappa_s)\) is the pair formed by 
the abelian quotient invariants (AQI) \(\alpha\)
and the (shallow) transfer kernel type (TKT) \(\varkappa=\varkappa_s\).
The notation \(G_a^e(z,w)\) with the parameters \(e;a,z,w\) is due to N. Blackburn and R. J. Miech.
The notation in angle brackets \(\langle\mathrm{ord},\mathrm{id}\rangle\) is due to
\cite{BEO2005},
\cite{GNO2006},
\cite{MAGMA2026},
\cite{MAGMA6561}.

\begin{figure}[ht]
\caption{Distribution of discriminants for \(\mathrm{Gal}(\mathrm{F}_3^\infty(K)/K)\) on the coclass-\(1\) tree \(\mathcal{T}(\Delta)\)}
\label{fig:MinDscTreeCc1}

{\tiny

\setlength{\unitlength}{0.8cm}
\begin{picture}(18,21)(-11,-20)

\put(-10,0.5){\makebox(0,0)[cb]{Order \(3^e\)}}

\put(-10,0){\line(0,-1){16}}
\multiput(-10.1,0)(0,-2){9}{\line(1,0){0.2}}

\put(-10.2,0){\makebox(0,0)[rc]{\(9\)}}
\put(-9.8,0){\makebox(0,0)[lc]{\(3^2\)}}
\put(-10.2,-2){\makebox(0,0)[rc]{\(27\)}}
\put(-9.8,-2){\makebox(0,0)[lc]{\(3^3\)}}
\put(-10.2,-4){\makebox(0,0)[rc]{\(81\)}}
\put(-9.8,-4){\makebox(0,0)[lc]{\(3^4\)}}
\put(-10.2,-6){\makebox(0,0)[rc]{\(243\)}}
\put(-9.8,-6){\makebox(0,0)[lc]{\(3^5\)}}
\put(-10.2,-8){\makebox(0,0)[rc]{\(729\)}}
\put(-9.8,-8){\makebox(0,0)[lc]{\(3^6\)}}
\put(-10.2,-10){\makebox(0,0)[rc]{\(2\,187\)}}
\put(-9.8,-10){\makebox(0,0)[lc]{\(3^7\)}}
\put(-10.2,-12){\makebox(0,0)[rc]{\(6\,561\)}}
\put(-9.8,-12){\makebox(0,0)[lc]{\(3^8\)}}
\put(-10.2,-14){\makebox(0,0)[rc]{\(19\,683\)}}
\put(-9.8,-14){\makebox(0,0)[lc]{\(3^9\)}}
\put(-10.2,-16){\makebox(0,0)[rc]{\(59\,049\)}}
\put(-9.8,-16){\makebox(0,0)[lc]{\(3^{10}\)}}

\put(-10,-16){\vector(0,-1){2}}

\put(-8,0.5){\makebox(0,0)[cb]{\(\alpha(1)=\)}}

\put(-8,0){\makebox(0,0)[cc]{\((1)\)}}
\put(-8,-2){\makebox(0,0)[cc]{\((1^2)\)}}
\put(-8,-4){\makebox(0,0)[cc]{\((21)\)}}
\put(-8,-6){\makebox(0,0)[cc]{\((2^2)\)}}
\put(-8,-8){\makebox(0,0)[cc]{\((32)\)}}
\put(-8,-10){\makebox(0,0)[cc]{\((3^2)\)}}
\put(-8,-12){\makebox(0,0)[cc]{\((43)\)}}
\put(-8,-14){\makebox(0,0)[cc]{\((4^2)\)}}
\put(-8,-16){\makebox(0,0)[cc]{\((54)\)}}

\put(-8,-17){\makebox(0,0)[cc]{\textbf{AQI}}}
\put(-8.5,-17.2){\framebox(1,18){}}

\put(3.7,-3){\vector(0,1){1}}
\put(3.9,-3){\makebox(0,0)[lc]{depth \(1\)}}
\put(3.7,-3){\vector(0,-1){1}}

\put(-5.1,-8){\vector(0,1){2}}
\put(-5.3,-7){\makebox(0,0)[rc]{period length \(2\)}}
\put(-5.1,-8){\vector(0,-1){2}}

\put(-0.1,-0.1){\framebox(0.2,0.2){}}
\put(-2.1,-0.1){\framebox(0.2,0.2){}}

\multiput(0,-2)(0,-2){8}{\circle*{0.2}}
\multiput(-2,-2)(0,-2){8}{\circle*{0.2}}
\multiput(-4,-4)(0,-2){7}{\circle*{0.2}}
\multiput(-6,-4)(0,-4){4}{\circle*{0.2}}

\multiput(2,-6)(0,-2){6}{\circle*{0.1}}
\multiput(4,-6)(0,-2){6}{\circle*{0.1}}
\multiput(6,-6)(0,-2){6}{\circle*{0.1}}

\multiput(0,0)(0,-2){8}{\line(0,-1){2}}
\multiput(0,0)(0,-2){8}{\line(-1,-1){2}}
\multiput(0,-2)(0,-2){7}{\line(-2,-1){4}}
\multiput(0,-2)(0,-4){4}{\line(-3,-1){6}}
\multiput(0,-4)(0,-2){6}{\line(1,-1){2}}
\multiput(0,-4)(0,-2){6}{\line(2,-1){4}}
\multiput(0,-4)(0,-2){6}{\line(3,-1){6}}

\put(0,-16){\vector(0,-1){2}}
\put(0.2,-17.6){\makebox(0,0)[lc]{infinite}}
\put(0.2,-18){\makebox(0,0)[lc]{mainline}}
\put(-1.0,-18.4){\makebox(0,0)[lc]{\(\mathcal{T}(\Delta)\subset\)}}
\put(0.2,-18.4){\makebox(0,0)[lc]{\(\mathcal{T}^1(\langle 9,2\rangle)\)}}

\put(0,0.1){\makebox(0,0)[lb]{\(=C_3\times C_3\)}}
\put(-2.5,0.1){\makebox(0,0)[rb]{\(C_9=\)}}
\put(-0.8,0.3){\makebox(0,0)[rb]{abelian}}
\put(0.1,-1.9){\makebox(0,0)[lb]{\(=G^3_0(0,0)=\Delta\)}}
\put(-2.5,-1.9){\makebox(0,0)[rb]{\(G^3_0(0,1)=\)}}
\put(-4.1,-3.9){\makebox(0,0)[lb]{\(=\mathrm{Syl}_3A_9\)}}
\put(-4,-4.5){\makebox(0,0)[cc]{\textbf{AQI} \quad \(\alpha(1)=(1^3)\)}}
\put(-5.5,-4.7){\framebox(2.9,0.5){}}

\put(0.2,-2.2){\makebox(0,0)[lt]{bifurcation from \(\mathcal{G}(3,1)\)}}
\put(1.5,-2.5){\makebox(0,0)[lt]{to \(\mathcal{G}(3,2)\)}}

\put(-2.1,0.1){\makebox(0,0)[rb]{\(\langle 1\rangle\)}}
\put(-0.1,0.1){\makebox(0,0)[rb]{\(\langle 2\rangle\)}}

\put(-0.2,-1){\makebox(0,0)[rc]{branch \(\mathcal{B}(2)\)}}
\put(-2.1,-1.9){\makebox(0,0)[rb]{\(\langle 4\rangle\)}}
\put(-0.1,-1.9){\makebox(0,0)[rb]{\(\langle 3\rangle\)}}

\put(-0.5,-3){\makebox(0,0)[cc]{\(\mathcal{B}(3)\)}}
\put(-6.1,-3.9){\makebox(0,0)[rb]{\(\langle 8\rangle\)}}
\put(-4.2,-3.9){\makebox(0,0)[rb]{\(\langle 7\rangle\)}}
\put(-2.1,-3.9){\makebox(0,0)[rb]{\(\langle 10\rangle\)}}
\put(-0.1,-3.9){\makebox(0,0)[rb]{\(\langle 9\rangle\)}}

\put(-4.1,-5.9){\makebox(0,0)[rb]{\(\langle 25\rangle\)}}
\put(-2.1,-5.9){\makebox(0,0)[rb]{\(\langle 27\rangle\)}}
\put(-0.1,-5.9){\makebox(0,0)[rb]{\(\langle 26\rangle\)}}

\put(0.5,-5){\makebox(0,0)[cc]{\(\mathcal{B}(4)\)}}
\put(2.1,-5.9){\makebox(0,0)[lb]{\(\langle 28\rangle\)}}
\put(4.1,-5.9){\makebox(0,0)[lb]{\(\langle 30\rangle\)}}
\put(6.1,-5.9){\makebox(0,0)[lb]{\(\langle 29\rangle\)}}

\put(-6.1,-7.9){\makebox(0,0)[rb]{\(\langle 98\rangle\)}}
\put(-4.1,-7.9){\makebox(0,0)[rb]{\(\langle 97\rangle\)}}
\put(-2.1,-7.9){\makebox(0,0)[rb]{\(\langle 96\rangle\)}}
\put(-0.1,-7.9){\makebox(0,0)[rb]{\(\langle 95\rangle\)}}

\put(1.0,-17.3){\framebox(6.2,11.7){}}
\put(0.9,-17.4){\framebox(6.4,11.9){}}

\put(0.5,-7){\makebox(0,0)[cc]{\(\mathcal{B}(5)\)}}
\put(2,-7.9){\makebox(0,0)[lb]{\(\langle 100\rangle\)}}
\put(4,-7.9){\makebox(0,0)[lb]{\(\langle 99\rangle\)}}
\put(6,-7.9){\makebox(0,0)[lb]{\(\langle 101\rangle\)}}

\put(-4.1,-9.9){\makebox(0,0)[rb]{\(\langle 388\rangle\)}}
\put(-2.1,-9.9){\makebox(0,0)[rb]{\(\langle 387\rangle\)}}
\put(-0.1,-9.9){\makebox(0,0)[rb]{\(\langle 386\rangle\)}}

\put(0.5,-9){\makebox(0,0)[cc]{\(\mathcal{B}(6)\)}}
\put(2.1,-9.9){\makebox(0,0)[lb]{\(\langle 390\rangle\)}}
\put(4.1,-9.9){\makebox(0,0)[lb]{\(\langle 389\rangle\)}}
\put(6.1,-9.9){\makebox(0,0)[lb]{\(\langle 391\rangle\)}}

\put(-5.8,-11.9){\makebox(0,0)[rb]{\(\langle 2224\rangle\)}}
\put(-4,-11.9){\makebox(0,0)[rb]{\(\langle 2223\rangle\)}}
\put(-1.8,-11.9){\makebox(0,0)[rb]{\(\langle 2222\rangle\)}}
\put(-0.1,-11.9){\makebox(0,0)[rb]{\(\langle 2221\rangle\)}}

\put(0.5,-11){\makebox(0,0)[cc]{\(\mathcal{B}(7)\)}}
\put(1.8,-11.9){\makebox(0,0)[lb]{\(\langle 2226\rangle\)}}
\put(3.8,-11.9){\makebox(0,0)[lb]{\(\langle 2225\rangle\)}}
\put(5.8,-11.9){\makebox(0,0)[lb]{\(\langle 2227\rangle\)}}

\put(0.5,-13){\makebox(0,0)[cc]{\(\mathcal{B}(8)\)}}
\put(0.5,-15){\makebox(0,0)[cc]{\(\mathcal{B}(9)\)}}

\put(0.1,-16.2){\makebox(0,0)[ct]{\(G^e_0(0,0)\)}}
\put(-2,-16.2){\makebox(0,0)[ct]{\(G^e_0(0,1)\)}}
\put(-4,-16.2){\makebox(0,0)[ct]{\(G^e_0(1,0)\)}}
\put(-6,-16.2){\makebox(0,0)[ct]{\(G^e_0(-1,0)\)}}
\put(2,-16.2){\makebox(0,0)[ct]{\(G^e_1(0,-1)\)}}
\put(4,-16.2){\makebox(0,0)[ct]{\(G^e_1(0,0)\)}}
\put(6,-16.2){\makebox(0,0)[ct]{\(G^e_1(0,1)\)}}

\put(-3.5,-17.7){\makebox(0,0)[ct]{with abelian maximal subgroup}}
\put(4.3,-17.7){\makebox(0,0)[ct]{without abelian maximal subgroup}}

\put(2.5,0){\makebox(0,0)[cc]{\textbf{TKT}}}
\put(3.5,0){\makebox(0,0)[cc]{a.1}}
\put(2.5,-0.5){\makebox(0,0)[cc]{\(\varkappa_s=\)}}
\put(3.5,-0.5){\makebox(0,0)[cc]{\((0000)\)}}
\put(1.8,-0.7){\framebox(2.6,1){}}
\put(-6,-2){\makebox(0,0)[cc]{\textbf{TKT}}}
\put(-5,-2){\makebox(0,0)[cc]{A.1}}
\put(-6,-2.5){\makebox(0,0)[cc]{\(\varkappa_s=\)}}
\put(-5,-2.5){\makebox(0,0)[cc]{\((1111)\)}}
\put(-6.7,-2.7){\framebox(2.6,1){}}

\put(-8,-19){\makebox(0,0)[cc]{\textbf{TKT}}}
\put(0,-19){\makebox(0,0)[cc]{a.1\({}^\ast\)}}
\put(-2,-19){\makebox(0,0)[cc]{a.2}}
\put(-4,-19){\makebox(0,0)[cc]{a.3}}
\put(-6,-19){\makebox(0,0)[cc]{a.3}}
\put(2,-19){\makebox(0,0)[cc]{a.1}}
\put(4,-19){\makebox(0,0)[cc]{a.1}}
\put(6,-19){\makebox(0,0)[cc]{a.1}}
\put(-8,-19.5){\makebox(0,0)[cc]{\(\varkappa_s=\)}}
\put(0,-19.5){\makebox(0,0)[cc]{\((0000)\)}}
\put(-2,-19.5){\makebox(0,0)[cc]{\((1000)\)}}
\put(-4,-19.5){\makebox(0,0)[cc]{\((2000)\)}}
\put(-6,-19.5){\makebox(0,0)[cc]{\((2000)\)}}
\put(2,-19.5){\makebox(0,0)[cc]{\((0000)\)}}
\put(4,-19.5){\makebox(0,0)[cc]{\((0000)\)}}
\put(6,-19.5){\makebox(0,0)[cc]{\((0000)\)}}
\put(-8.7,-19.7){\framebox(15.4,1){}}

\put(-4,-4){\oval(1.5,2)}
\multiput(-6,-4)(4,0){2}{\oval(1.5,1.5)}
\multiput(-5,-8)(0,-4){3}{\oval(3.6,1.5)}
\multiput(-2,-8)(0,-4){3}{\oval(1.5,1.5)}

\multiput(2,-8)(0,-4){3}{\oval(1.5,1.5)}
\multiput(4,-8)(0,-4){3}{\oval(1.5,1.5)}
\multiput(6,-8)(0,-4){3}{\oval(1.5,1.5)}

\put(-6,-5.1){\makebox(0,0)[cc]{\underbar{\textbf{32\,009}}}}
\put(-4,-5.2){\makebox(0,0)[cc]{\underbar{\textbf{142\,097}}}}
\put(-2,-5.1){\makebox(0,0)[cc]{\underbar{\textbf{72\,329}}}}
\put(-6,-9.1){\makebox(0,0)[cc]{\underbar{\textbf{494\,236}}}}
\put(-2,-9.1){\makebox(0,0)[cc]{\underbar{\textbf{790\,085}}}}
\put(2,-9.1){\makebox(0,0)[cc]{\underbar{\textbf{152\,949}}}}
\put(4,-9.1){\makebox(0,0)[cc]{\underbar{\textbf{62\,501}}}}
\put(6,-9.1){\makebox(0,0)[cc]{\underbar{\textbf{252\,977}}}}
\put(-6,-13.1){\makebox(0,0)[cc]{\underbar{\textbf{10\,200\,108}}}}
\put(-2,-13.1){\makebox(0,0)[cc]{\underbar{\textbf{14\,458\,876}}}}
\put(2,-13.1){\makebox(0,0)[cc]{\underbar{\textbf{27\,780\,297}}}}
\put(4,-13.1){\makebox(0,0)[cc]{\underbar{\textbf{10\,399\,596}}}}
\put(6,-13.1){\makebox(0,0)[cc]{\underbar{\textbf{2\,905\,160}}}}
\put(-6,-17.1){\makebox(0,0)[cc]{\underbar{\textbf{208\,540\,653}}}}
\put(-2,-17.1){\makebox(0,0)[cc]{\underbar{\textbf{37\,304\,664}}}}
\put(2,-17.1){\makebox(0,0)[cc]{\underbar{\textbf{62\,565\,429}}}}
\put(4,-17.1){\makebox(0,0)[cc]{\underbar{\textbf{63\,407\,037}}}}
\put(6,-17.1){\makebox(0,0)[cc]{\underbar{\textbf{40\,980\,808}}}}

\end{picture}

}

\end{figure}

\newpage

\subsection{Entropy of unbounded coclass}
\label{ss:UnboundedCoClass}
Now we compare the two different measures
\({\bf P}_i\) (imaginary) and \({\bf P}_r\) (real)
of a descendant tree
\(\mathcal{T}(N)\)
with periodic bifurcations and consequently with unbounded coclass,
which is realized by both, imaginary-quadratic and real-quadratic fields.
For this purpose we need the \textit{imaginary probability-measure} of Boston, Bush, Hajir (2017)
\cite{BBH2017}:
\begin{equation}
\label{eqn:ImagMeasure}
{\bf P}_i(G)=\frac{y(G)^{g}}{\#\mathrm{Aut}(G)}\cdot(p^g)^{g}\cdot\prod_{k=1}^{g}\,\left(1-\frac{1}{p^k}\right)^2.
\end{equation}
In the special case
of the smallest odd prime number \(p=3\)
and the generator-rank \(g=2\),
which is relevant for our goal,
the constant factor after the invariants depending on \(G\) is here given by:
\[\frac{2^8}{3^2}.\]
The \textit{conversion-factor} between the two measures for \textbf{Schur \(\sigma\)-groups} (!) is in general:
\begin{equation}
\label{eqn:Factor}
{\bf P}_r(G)={\bf P}_i(G)\cdot\frac{y(G)}{\#G}\cdot\frac{p^{g+1}-1}{p-1}.
\end{equation}
In particular, for \(p=3\), \(g=2\), the constant factor (independent of \(G\)) is simply the number \(13\).
\begin{theorem}
\label{thm:UnboundedCoClass}
The \textbf{entropy} of the descendant tree
\(\mathcal{T}(N)\)
of all non-metabelian \(3\)-groups \(G\) with Artin-pattern
\((\alpha(G),\varkappa(G))
=((\lbrack 3,3\rbrack;\lbrack 3,3,3\rbrack^3,\lbrack 9,3\rbrack),(4111))\)
and \textbf{unbounded coclass},
\(\mathrm{cc}(G)\ge 2\),
which are descendants of Ascione's non-CF-group
\(N=\langle 729,45\rangle\)
\cite{BaBu2007},
\cite{Ma2017},
with respect to the normalized relative-measure
\({\bf P}_{r,rel}(G)=\frac{3^{10}}{2^6\cdot 13}\cdot{\bf P}_{r}(G)\)
associated to the real probability-measure \({\bf P}_{r}(G)\) in Formula
\eqref{eqn:RealMeasure}
is given by
\begin{equation}
\label{eqn:EntropyReal}
H({\bf P}_{r,rel})
=-\sum\,{\bf P}_{r,rel}(G_n^i)\log({\bf P}_{r,rel}(G_n^i)
=\frac{1}{26}\biggl(63\log(3)-42\log(2)\biggr)
\approx{\bf 1.54232},
\end{equation}
whereas, according to
\cite{MMSM2026},
with respect to the normalized relative-measure
\({\bf P}_{i,rel}(G)=\frac{3^6}{2^6}\cdot{\bf P}_{i}(G)\)
associated to the imaginary probability-measure \({\bf P}_{i}(G)\) in Formula
\eqref{eqn:ImagMeasure}
it is given by
\begin{equation}
\label{eqn:EntropyImag}
H({\bf P}_{i,rel})
=-\sum\,{\bf P}_{i,rel}(G_n^i)\log({\bf P}_{i,rel}(G_n^i))
=\frac{3}{2}\log(3)-\log(2)
\approx{\bf 0.9548}.
\end{equation}
\end{theorem}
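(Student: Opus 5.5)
The plan mirrors the proof of Theorem~\ref{thm:CoClassOne}. The main input is a table of parametrized formation laws for the terminal $\sigma$-descendants of $N=\langle 729,45\rangle$, analogous to Table~\ref{tbl:FormationLawsOne}; I do not have this table and will need to read it off from the periodic bifurcation structure described in \cite{BaBu2007} and \cite{Ma2017}. Since $\mathcal{T}(N)$ has periodic bifurcations, the natural parameter is the step $n$ along the chain of bifurcation vertices. For each $n$ I will record, for each kind of terminal vertex:
\begin{itemize}
\item the number of such vertices,
\item their relation rank $r\in\{2,3\}$,
\item the invariants $y(G_n^i)$, $\#\mathrm{Aut}(G_n^i)$ and $\#G_n^i$.
\end{itemize}
There are two separate counts. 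For the real measure, the Schur$+1$ $\sigma$-groups with $r=g+1=3$ contribute with the constant $\frac{2^9\cdot 13}{3^3}$ of Formula~\eqref{eqn:RealFactor}. For the imaginary measure only the Schur $\sigma$-groups with $r=g=2$ contribute, with the constant $\frac{2^8}{3^2}$.

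For the imaginary part, I expect the table to give ${\bf P}_i(G_n)=\frac{2^7}{3^{n+6}}$, with exactly one Schur $\sigma$-group per step $n\ge 1$. Then the geometric series gives total mass $\frac{2^6}{3^6}$, which matches the normalizing factor $\frac{3^6}{2^6}$ in the statement. The normalized measure is ${\bf P}_{i,rel}(G_n)=\frac{2}{3^n}$, with $-\log {\bf P}_{i,rel}(G_n)=n\log 3-\log 2$. Proposition~\ref{prp:GeometricVariant} with $q=\frac13$ gives $\sum_{n\ge1} n\,3^{-n}=\frac34$, and $\sum_{n\ge 1} 2\cdot 3^{-n}=1$. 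Hence
\[H({\bf P}_{i,rel})=2\log 3\cdot\tfrac34-\log 2=\tfrac32\log 3-\log 2,\]
which is Formula~\eqref{eqn:EntropyImag}. As a consistency check I will verify these Schur groups against the conversion formula \eqref{eqn:Factor}, ${\bf P}_r={\bf P}_i\cdot 13\cdot y/\#G$.

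For the real part, I will insert the table entries into Formula~\eqref{eqn:RealMeasure} and group the terminal vertices into finitely many families, as with a.1, a.2 and a.3 before. I expect each family to have the form ${\bf P}_r(G_n^i)=\frac{2^a\cdot 13}{3^{3n+b}}$, since the $26$ in the target value points to ratio $\frac{1}{27}$ per step. The steps are then:
\begin{enumerate}
\item Sum each family with $\sum_n 27^{-n}=\frac{27}{26}$, weighted by its multiplicity.
\item Check that the total is $\frac{2^6\cdot 13}{3^{10}}$, which justifies the normalization ${\bf P}_{r,rel}=\frac{3^{10}}{2^6\cdot 13}{\bf P}_r$.
\item Tabulate ${\bf P}_{r,rel}$ and $-\log{\bf P}_{r,rel}$ per family, as in Table~\ref{tbl:RelativeMeasuresOne}.
\item Evaluate the entropy with $\sum_{n\ge1} n\,27^{-n}=\frac{27}{676}$ (Proposition~\ref{prp:GeometricVariant}) together with the geometric series, aiming at $\frac1{26}\bigl(63\log 3-42\log 2\bigr)$.
\end{enumerate}

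The main obstacle is the first step: establishing the formation laws. Concretely, I must determine how many Schur$+1$ $\sigma$-groups (and possibly Schur $\sigma$-groups) with the prescribed Artin pattern lie on each branch, and find closed forms for $\#\mathrm{Aut}(G_n^i)$ and $y(G_n^i)$ as functions of $n$. To do this, I will first compute the invariants for the first two or three periods with the $p$-group generation algorithm in MAGMA. I will then extrapolate a linear law in $n$ for the exponents, justified by the periodicity of the bifurcations. The coefficient of $\log 2$ in the target value should serve as a check on the powers of $2$ in the automorphism group orders.
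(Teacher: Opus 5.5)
You follow the paper's route: formation laws, then Formula \eqref{eqn:RealMeasure} family by family, geometric sums, normalization, and Proposition \ref{prp:GeometricVariant}. Your imaginary computation is correct. It is the paper's Schur family 606 after re-indexing by one, since \(y=3^{n+2}\) and \(\#\mathrm{Aut}(G)=2\cdot 3^{3n+9}\) give \(\mathbf{P}_i(G_n)=\frac{3^{2n+4}}{2\cdot 3^{3n+9}}\cdot\frac{2^8}{3^2}=\frac{2^7}{3^{n+7}}\) for \(n\ge 0\). The paper simply quotes that value from the literature.

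The gap is the real half. There you only postulate the shape \(\frac{2^a\cdot 13}{3^{3n+b}}\) and compute ``aiming at'' the target. The substance of the paper's proof is the explicit Table \ref{tbl:FormationLawsUnbounded}, which you do not supply. What is needed is four Schur\(+1\) families 270, 271, 272, 273, one group each for every \(n\ge 0\), with \(y=3^{n+2}\), \(\#G=3^{3n+7}\) and \(\#\mathrm{Aut}(G)=2^2\cdot 3^{3n+8}\), \(2^2\cdot 3^{3n+9}\), \(2\cdot 3^{3n+9}\), \(2\cdot 3^{3n+8}\) respectively. These give \(\mathbf{P}_r=\frac{2^7\cdot 13}{3^{3n+12}}\), \(\frac{2^7\cdot 13}{3^{3n+13}}\), \(\frac{2^8\cdot 13}{3^{3n+13}}\), \(\frac{2^8\cdot 13}{3^{3n+12}}\). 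The paper also takes these laws as input from the group databases rather than deriving them, so your MAGMA-plus-periodicity plan is the right kind of source. But until the families are pinned down, nothing in the real part is established. With them, your steps 1--4 reproduce the paper's arithmetic.

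Second, your split into ``two separate counts'' is risky. You attach the Schur \(\sigma\)-groups only to the imaginary measure and quote only the Schur\(+1\) constant \(\frac{2^9\cdot 13}{3^3}\) for the real one. The real measure \eqref{eqn:RealMeasure} also charges the Schur \(\sigma\)-groups, with the \(r=g\) constant \(\frac{2^8\cdot 13}{3^2}\) of \eqref{eqn:RealFactor}. The paper includes family 606 (\(\#G=3^{3n+8}\)) in the real sum, where \(\mathbf{P}_r=\frac{2^7\cdot 13}{3^{3n+13}}\) coincides with family 271.

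This is not cosmetic. The Schur\(+1\) families alone sum to \(\frac{2^6}{3^{10}}(3+1+2+6)=\frac{2^6\cdot 12}{3^{10}}\). Only the extra \(\frac{2^6}{3^{10}}\) from family 606 gives the total \(\frac{2^6\cdot 13}{3^{10}}\) that justifies the normalizing factor \(\frac{3^{10}}{2^6\cdot 13}\). Likewise, the value \(\frac{1}{26}\bigl(63\log 3-42\log 2\bigr)\) needs, for each \(n\), two groups of relative measure \(\frac{2}{3^{3n+3}}\), namely those from 606 and 271.
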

\begin{proof}
In order to analyze the descendant tree \(\mathcal{T}(N)\), we need the \textit{real probability-measure}
in Formula
\eqref{eqn:RealMeasure}
and the Schur and (Schur\(+1\)) \(\sigma\)-descendants \(G\) of \(N\) in the Table
\ref{tbl:FormationLawsUnbounded}
\cite{BEO2005},
\cite{MAGMA6561}. \\
Using the Formula
\eqref{eqn:RealMeasure},
for \(r=g+1\),
and the information in Table
\ref{tbl:FormationLawsUnbounded},
we calculate the measures:
\({\bf P}_r(G)=\frac{y(G)^{2+1}}{\#\mathrm{Aut}(G)\cdot\#G}\cdot\frac{2^9\cdot 13}{3^3}\), grouped by families: 
270: \(\frac{(3^{n+2})^3}{2^2\cdot 3^{3n+8}\cdot 3^{3n+7}}\cdot\frac{2^9\cdot 13}{3^3}=\frac{2^7\cdot 13}{3^{3n+12}}\),
271: \(\frac{(3^{n+2})^3}{2^2\cdot 3^{3n+9}\cdot 3^{3n+7}}\cdot\frac{2^9\cdot 13}{3^3}=\frac{2^7\cdot 13}{3^{3n+13}}\),
272: \(\frac{(3^{n+2})^3}{2\cdot 3^{3n+9}\cdot 3^{3n+7}}\cdot\frac{2^9\cdot 13}{3^3}=\frac{2^8\cdot 13}{3^{3n+13}}\),
273: \(\frac{(3^{n+2})^3}{2\cdot 3^{3n+8}\cdot 3^{3n+7}}\cdot\frac{2^9\cdot 13}{3^3}=\frac{2^8\cdot 13}{3^{3n+12}}\).
For \(r=g\), however, we have 606:
\(\frac{(3^{n+2})^3}{2\cdot 3^{3n+9}\cdot 3^{3n+8}}\cdot\frac{2^8\cdot 13}{3^2}=\frac{2^7\cdot 13}{3^{3n+13}}\),
which coincides with 271.
\renewcommand{\arraystretch}{1.2}
\begin{table}[ht]
\caption{Parametrized formation-laws of invariants, grouped by families}
\label{tbl:FormationLawsUnbounded}
\begin{center}
\begin{tabular}{|c||c||c|c|c|c|}
\hline
 Family                    & \(606\)                           &  \(270\)                          &  \(271\)                          &  \(272\)                          &  \(273\)                          \\
\hline
 \(r\)                     & \(2\) (Schur)                     & \(3\) (Schur\(+1\))               & \(3\) (Schur\(+1\))               & \(3\) (Schur\(+1\))               & \(3\) (Schur\(+1\))               \\
 State                     & \(n\ge 0\), \(1\) group           & \(n\ge 0\), \(1\) group           & \(n\ge 0\), \(1\) group           & \(n\ge 0\), \(1\) group           & \(n\ge 0\), \(1\) group           \\
 \(y(G_n^i)\)              & \(3^{n+2}\)                       & \(3^{n+2}\)                       & \(3^{n+2}\)                       & \(3^{n+2}\)                       & \(3^{n+2}\)                       \\
 \(\#\mathrm{Aut}(G_n^i)\) & \(2\cdot 3^{3n+9}\)               & \(2^2\cdot 3^{3n+8}\)             & \(2^2\cdot 3^{3n+9}\)             & \(2\cdot 3^{3n+9}\)               & \(2\cdot 3^{3n+8}\)               \\
 \(\#G_n^i\)               & \(3^{3n+8}\)                      & \(3^{3n+7}\)                      & \(3^{3n+7}\)                      & \(3^{3n+7}\)                      & \(3^{3n+7}\)                      \\
 \({\bf P}_r(G_n^i)\)      & \(\frac{2^7\cdot 13}{3^{3n+13}}\) & \(\frac{2^7\cdot 13}{3^{3n+12}}\) & \(\frac{2^7\cdot 13}{3^{3n+13}}\) & \(\frac{2^8\cdot 13}{3^{3n+13}}\) & \(\frac{2^8\cdot 13}{3^{3n+12}}\) \\
\hline
\end{tabular}
\end{center}
\end{table}

\noindent
Now we add all probability-measures
of the descendant tree \(\mathcal{T}(N)\),
grouped by families,
and using the geometric series
\(\sum_{n=0}^{\infty}\,\frac{1}{27^n}
=\frac{1}{1-\frac{1}{27}}
=\frac{1}{\frac{26}{27}}
=\frac{27}{26}
=\frac{3^3}{2\cdot 13}\).
\begin{equation}
\label{eqn:270}
1 \text{ times 270}: \quad
\sum_{n=0}^{\infty}\,{\bf P}_r(G_n^0)=
\sum_{n=0}^{\infty}\,\frac{2^7\cdot 13}{3^{3n+12}}=
\frac{2^7\cdot 13}{3^{12}}\cdot\sum_{n=0}^{\infty}\,\left(\frac{1}{3^3}\right)^n=
\frac{2^7\cdot 13}{3^{12}}\cdot\frac{3^3}{2\cdot 13}=
\frac{2^6}{3^9},
\end{equation}
\begin{equation}
\label{eqn:271}
\text{606 and 271}: \quad
\sum_{n=0}^{\infty}\,{\bf P}_r(G_n^1)=
\sum_{n=0}^{\infty}\,\frac{2^7\cdot 13}{3^{3n+13}}=
\frac{2^7\cdot 13}{3^{13}}\cdot\sum_{n=0}^{\infty}\,\left(\frac{1}{3^3}\right)^n=
\frac{2^7\cdot 13}{3^{13}}\cdot\frac{3^3}{2\cdot 13}=
\frac{2^6}{3^{10}},
\end{equation}
\begin{equation}
\label{eqn:272}
1 \text{ times 272}: \quad
\sum_{n=0}^{\infty}\,{\bf P}_r(G_n^2)=
\sum_{n=0}^{\infty}\,\frac{2^8\cdot 13}{3^{3n+13}}=
\frac{2^8\cdot 13}{3^{13}}\cdot\sum_{n=0}^{\infty}\,\left(\frac{1}{3^3}\right)^n=
\frac{2^8\cdot 13}{3^{13}}\cdot\frac{3^3}{2\cdot 13}=
\frac{2^7}{3^{10}},
\end{equation}
\begin{equation}
\label{eqn:273}
1 \text{ times 273}: \quad
\sum_{n=0}^{\infty}\,{\bf P}_r(G_n^3)=
\sum_{n=0}^{\infty}\,\frac{2^8\cdot 13}{3^{3n+12}}=
\frac{2^8\cdot 13}{3^{12}}\cdot\sum_{n=0}^{\infty}\,\left(\frac{1}{3^3}\right)^n=
\frac{2^8\cdot 13}{3^{12}}\cdot\frac{3^3}{2\cdot 13}=
\frac{2^7}{3^9}.
\end{equation}
Finally we add these four contributions to the
\textbf{total measure of the tree}, taking into account their multiplicities:
\begin{equation}
\label{eqn:cc2}
\sum\,{\bf P}_{r}(G_n^i)=
\frac{2^6}{3^9}+2\cdot\frac{2^6}{3^{10}}+\frac{2^7}{3^{10}}+\frac{2^7}{3^9}=
\frac{2^6}{3^{10}}\cdot(3+2+2+6)=
\frac{2^6\cdot 13}{3^{10}}=
\frac{\bf 832}{\bf 59049}.
\end{equation}
With this result, we must normalize the probability-measure onto sum one,
\({\bf P}_{rel}(G)=\frac{3^{10}}{2^6\cdot 13}\cdot{\bf P}_{r}(G)\):
\renewcommand{\arraystretch}{1.2}
\begin{table}[ht]
\caption{Normalized relative-measure and logarithms, grouped by families}
\label{tbl:RelativeMeasuresUnbounded}
\begin{center}
\begin{tabular}{|c||c|c|c|c|}
\hline
 Family                          & \(270\)                  &  \(606,271\)             &  \(272\)                 &  \(273\)                 \\
\hline
 State                           & \(n\ge 0\), \(1\) group  & \(n\ge 0\), \(2\) groups & \(n\ge 0\), \(1\) group  & \(n\ge 0\), \(1\) group  \\
 \({\bf P}_{rel}(G_n^i)\)        & \(\frac{2}{3^{3n+2}}\)   & \(\frac{2}{3^{3n+3}}\)   & \(\frac{2^2}{3^{3n+3}}\) & \(\frac{2^2}{3^{3n+2}}\) \\
 \(-\log({\bf P}_{rel}(G_n^i))\) & \((3n+2)\log3-\log2\)    & \((3n+3)\log3-\log2\)    & \((3n+3)\log3-2\log2\)   & \((3n+2)\log3-2\log2\)   \\
\hline
\end{tabular}
\end{center}
\end{table}

\noindent
We can now calculate the \textbf{entropy of the unbounded coclass}:
{\small
\begin{equation*}
\label{eqn:Entropy}
\begin{aligned}
H({\bf P}_{rel})
&=-\sum\,{\bf P}_{rel}(G_n^i)\log({\bf P}_{rel}(G_n^i)) \\
&=\sum_{n=0}^{\infty}\,\frac{2}{3^{3n+2}}\biggl((3n+2)\log3-\log2\biggr)
+2\sum_{n=0}^{\infty}\,\frac{2}{3^{3n+3}}\biggl((3n+3)\log3-\log2\biggr) \\
&+\sum_{n=0}^{\infty}\,\frac{2^2}{3^{3n+3}}\biggl((3n+3)\log3-2\log2\biggr)
+\sum_{n=0}^{\infty}\,\frac{2^2}{3^{3n+2}}\biggl((3n+2)\log3-2\log2\biggr) \\
&=\frac{2}{9}\biggl(3\log3\cdot\sum_{n=0}^{\infty}\,\frac{n}{27^n}+(2\log3-\log2)\cdot\sum_{n=0}^{\infty}\,\frac{1}{27^n}\biggr) \\
&+\frac{4}{27}\biggl((3\log3+3\log3)\cdot\sum_{n=0}^{\infty}\,\frac{n}{27^n}+(3\log3-\log2+3\log3-2\log2)\cdot\sum_{n=0}^{\infty}\,\frac{1}{27^n}\biggr) \\
&+\frac{4}{9}\biggl(3\log3\cdot\sum_{n=0}^{\infty}\,\frac{n}{27^n}+(2\log3-2\log2)\cdot\sum_{n=0}^{\infty}\,\frac{1}{27^n}\biggr) \\
&=\biggl(\frac{2}{3}+\frac{8}{9}+\frac{4}{3}\biggr)\log3\cdot\frac{27}{26^2}
+\biggl(\frac{2}{9}(2\log3-\log2)+\frac{12}{27}(2\log3-\log2)+\frac{8}{9}(\log3-\log2)\biggr)\cdot\frac{27}{26} \\
&=\frac{1}{26}\biggl(63\log3-42\log2\biggr)
\approx{\bf 1.54232}. \qedhere 
\end{aligned}
\end{equation*}
}
\end{proof}


\section{Acknowledgements}
\label{s:Acknowledgements}

\noindent
We thank our academic instructor,
the Austrian mathematician Daniel C. Mayer,
for the suggestion
to apply the concept of entropy
to infinite probability distributions,
arising from \(3\)-class field tower groups
of real quadratic number fields.
 


\end{document}